\documentclass{amsart}
\usepackage[english]{babel}
\usepackage{amssymb}
\usepackage{hyperref}   
\usepackage{mathtools}
\mathtoolsset{showonlyrefs}


\newtheorem{theorem}{Theorem}

\begin{document}

\title{Mass and polyhedra in asymptotically hyperbolic manifolds}

\author{Xiaoxiang Chai}
\address{Korea Institute for Advanced Study, Seoul 02455, South Korea}
\email{xxchai@kias.re.kr}

\begin{abstract}
  Using the upper half space model, we evaluate a component of the hyperbolic
  mass functional evaluated on a special family of polyhedra extending a
  formula of Miao-Piubello.
\end{abstract}

{\maketitle}

\section{Introduction}

We say that a three dimensional manifold $(M, g)$ is an asymptotically
hyperbolic manifold if outside a compact set $M$ is diffeomorphic to the
standard hyperbolic space $(\mathbb{H}^3, \bar{g})$ minus a geodesic ball and
\begin{equation}
  |e|_{\bar{g}} + | \bar{\nabla} e|_{\bar{g}} + | \bar{\nabla} \bar{\nabla}
  e|_{\bar{g}} = O (\mathrm{e}^{- \tau r}),
\end{equation}
where $\bar{\nabla}$ is the connection on $\mathbb{H}^3$, $r$ is the distance
function to a fixed point $o$ and $\tau > \tfrac{3}{2}$. We fix $o$ to be the
point $(0, 0, 1)$ in the upper half space model
\begin{equation}
  \bar{g} = \tfrac{1}{(x^3)^2} ((\mathrm{d} x^1)^2 + (\mathrm{d} x^2)^2 +
  (\mathrm{d} x^3)^2) .
\end{equation}
Then
\begin{equation}
  2 \cosh r = \tfrac{1}{x^3} ((x^1)^2 + (x^2)^2 + (x^3)^2 + 1) .
  \label{distance formula}
\end{equation}
See {\cite[Chapter A]{benedetti-lectures-1992}}. We assume in this article
that $M$ is diffeomorphic to $\mathbb{H}^3$. Using the upper half space
coordinates, we say that $\mathbf{{\Delta}}$ is a {\itshape{polyhedron}} if
each of its faces is a plane in the Euclidean sense whatever the metric the
upper half space carries.

Let $V = \tfrac{1}{x^3}$ and $F$ be a face of the polyhedron, let $\bar{\nu}$
be the $\bar{g}$-normal pointing outward of $\mathbf{{\Delta}} :=
\mathbf{{\Delta}}_q$, we see that
\begin{equation}
  \bar{\nu} = x^n a^i \partial_i
\end{equation}
where $a^i$ are constants and $a$ is a vector of length one under the
Euclidean metric. Due to conformality of $\bar{g}$ to the Euclidean metric
$\delta$, faces of $\mathbf{{\Delta}}$ meets at constant angles. Easily,
\begin{equation}
  \partial_{\bar{\nu}} V = x^3 a^i \partial_i (\tfrac{1}{x^n}) = - a^3
  \tfrac{1}{x^3} = - a^3 V. \label{normal derivative of static potential}
\end{equation}
Note that $a^3 \in [- 1, 1]$ and the case of $a^3 = \pm 1$ is used in the
works of Jang-Miao {\cite{jang-hyperbolic-2021}} and the author
{\cite{chai-asymptotically-2021}}. When $|a^3 | < 1$, this faces lies in a
so-called equidistant hypersurface. Suppose that the face lies in a plane
which intersect the $x^3$ axis at $x^3 = z_0$, then following
{\cite{chai-asymptotically-2021}}, the vector $X = x - z_0 \partial_n$ is
tangent to the face and $\ensuremath{\operatorname{div}} (x - z_0 \partial_n)
= z_0 V$. We calculate the second fundamental form of each face under the
metric $b$. Pick a local coordinate $y^{\alpha}$ where $\alpha = 1, 2$. Using
the conformality to $\delta$,
\begin{align}
  & \bar{A}_{\alpha \beta} \\
  = & \langle \partial_{\alpha}, \partial_{\beta} \rangle_{\delta} x^3
  \partial_a \tfrac{1}{(x^3)^2} \\
  = & - a^3 \tfrac{1}{(x^3)^2} \langle \partial_{\alpha}, \partial_{\beta}
  \rangle_{\delta} \\
  = & - a^3 \langle \partial_{\alpha}, \partial_{\beta} \rangle_b .
  \label{second fundamental form background} 
\end{align}
Each face $F$ is umbilic and the mean curvature is then $\bar{H} = - 2 a^3$.
Euclidean spheres are also umbilic in the hyperbolic metric, however, they do
not satisfy the condition {\eqref{normal derivative of static potential}}.
Obviously, these discussions works for higher dimension

The mass integrand for an asymptotically hyperbolic manifold (See
{\cite{chrusciel-mass-2003}}) is
\begin{equation}
  \mathbb{U}= V\ensuremath{\operatorname{div}}e - V \mathrm{d}
  (\ensuremath{\operatorname{tr}}_b e) +\ensuremath{\operatorname{tr}}_b e
  \mathrm{d} V - e (\bar{\nabla} V, \cdot) .
\end{equation}
We assume that for a family of polyhedra indexed by $q$ the mass satisfies the
following
\[ \mathbf{M} (V) = \int_{\partial \mathbf{{\Delta}}_q} \mathbb{U}^i
   \bar{\nu}_i \mathrm{d} \bar{\sigma} + o (1), \label{mass} \]
where $\bar{\nu}$ is the $b$-normal to the face of $\mathbf{{\Delta}}_q$ and
$\mathrm{d} \bar{\sigma}$ is the two dimensional volume element.

Such a family is easy to find. For example, according to
{\cite{chrusciel-mass-2003}} or {\cite{michel-geometric-2011}}, if each
polyhedron of the family is enclosed by a geodesic sphere and enclose another
geodesic sphere, radius of each sphere goes to infinity as
$\mathbf{{\Delta}}_q$ exhaust the manifold $M$, then such a family provides an
example. Let $E_q$ be the all edges of $\mathbf{{\Delta}}_q$, $\alpha$ be the
dihedral angle for by neighboring faces along $E_q$. We denote by $\mathrm{d}
v$, $\mathrm{d} \sigma$ and $\mathrm{d} \lambda$ respectively the three, two
and one dimensional volume element. We put a bar over a letter to indicate the
quantity is calculated with respect to the background metric $\bar{g}$.

\begin{theorem}
  \label{miao}Assuming each dihedral angle satisfies the bound $\sin
  \bar{\alpha} \geqslant c > 0$, then the mass $\mathbf{M} (V)$ is
  \begin{align}
    & \mathbf{M} (V) \\
    = & - \int_{\partial \mathbf{{\Delta}}_q} 2 V (H - \bar{H}) \mathrm{d}
    \bar{\sigma} + 2 \int_{E_q} V (\alpha - \bar{\alpha}) \mathrm{d}
    \bar{\lambda} \\
    & + \int_{\partial \mathbf{{\Delta}}_q} O (\cosh^{- 2 \tau + 1} r)
    \mathrm{d} \bar{\sigma} + \int_{E_q} O (\cosh^{- 2 \tau + 1} r) \mathrm{d}
    \bar{\lambda} + o (1) . \label{miao-piubello} 
\end{align}
\end{theorem}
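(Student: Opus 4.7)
The plan is to start from the defining identity
\begin{equation}
\mathbf{M}(V) = \int_{\partial \mathbf{\Delta}_q} \mathbb{U}^i \bar{\nu}_i\,\mathrm{d}\bar{\sigma} + o(1)
\end{equation}
and rewrite the integrand face by face. The preliminary computations in the excerpt give $\bar{H} = -2a^3$ and $\partial_{\bar{\nu}} V = -a^3 V$, so both are constant up to the factor $V$ on each face $F$. The first task is to expand the four summands of $\mathbb{U}^i\bar{\nu}_i$ and to regroup them into a piece proportional to $V(H-\bar{H})$ plus a tangential divergence on $F$, up to a remainder that is quadratic in $e$.

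For the principal term I would adapt the standard Wang--Chru\'sciel--Herzlich identity on smooth hypersurfaces: linearise $H - \bar{H}$ in terms of $\operatorname{div}_b e$, $\mathrm{d}(\operatorname{tr}_b e)$ and contractions of $\bar{A}$ with $e$, then substitute the umbilicity relation $\bar{A}_{\alpha\beta} = -a^3 \langle \partial_\alpha,\partial_\beta\rangle_b$ and the identity $\partial_{\bar{\nu}} V = -a^3 V$ repeatedly to cancel the interior curvature contributions. The expected pointwise identity has the form
\begin{equation}
\mathbb{U}^i \bar{\nu}_i = -2V(H-\bar{H}) + \operatorname{div}_F W + V\cdot O(|e|^2),
\end{equation}
where $W$ is a tangential vector field on $F$ built linearly from $V$, $e$, and $\bar{\nabla} V$. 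Since $|e|\le C\,\mathrm{e}^{-\tau r}$ and $V\sim\cosh r$, the quadratic remainder integrates to the claimed face error of size $O(\cosh^{-2\tau+1}r)$.

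Applying the divergence theorem on each face converts $\int_F \operatorname{div}_F W$ into a line integral along $\partial F$ against the $b$-conormal $\bar{\mu}$. Along each edge $E \subset E_q$ two adjacent faces contribute conormals $\bar{\mu}_1,\bar{\mu}_2$, and the two traces of $W$ should combine into $2V(\alpha-\bar{\alpha})$ per unit $\bar{g}$-length, plus $O(|e|^2)$. The key linearised identity is
\begin{equation}
\alpha - \bar{\alpha} = -\frac{\cos\alpha - \cos\bar{\alpha}}{\sin\bar{\alpha}} + O\bigl((\alpha-\bar{\alpha})^2\bigr),
\end{equation}
together with $\cos\alpha - \cos\bar{\alpha} = O(|e|)$; this is precisely where the hypothesis $\sin\bar{\alpha}\ge c > 0$ is needed, namely to invert $\sin\bar{\alpha}$ and keep the quadratic remainder controlled.

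The main obstacle is the edge identification. From the explicit form of $W$ one must verify that $W_1\cdot\bar{\mu}_1 + W_2\cdot\bar{\mu}_2$ evaluates to $2V(\cos\alpha-\cos\bar{\alpha})/\sin\bar{\alpha}$ modulo quadratic terms, and not to some other combination of $e$ in the edge frame; this is the hyperbolic analogue of the Miao--Piubello boundary calculation and requires careful bookkeeping of how $\bar{\mu}_1$ and $\bar{\mu}_2$ decompose in an orthonormal frame adapted to the edge direction. Assembling the face and edge pieces together with their respective quadratic remainders then yields \eqref{miao-piubello}, with the residual $o(1)$ inherited from the defining asymptotic relation for $\mathbf{M}(V)$.
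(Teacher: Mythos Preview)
Your outline is correct and matches the paper's proof essentially step for step: the paper obtains the face identity $\mathbb{U}^i\bar{\nu}_i = -2V(H-\bar{H}) - \operatorname{div}_F(VX) + O(\mathrm{e}^{(-2\tau+1)r})$ by quoting \cite[(2.5)]{jang-hyperbolic-2021} and then invoking exactly the cancellation you describe between $(\operatorname{tr}_b e - e(\bar{\nu},\bar{\nu}))\partial_{\bar{\nu}}V$ and $V\langle \bar{A},e\rangle_b$ coming from umbilicity and $\partial_{\bar{\nu}}V=-a^3V$; for the edge term it applies the divergence theorem and then defers the identification of $V[e(\bar{\nu}_A,\bar{n}_A)+e(\bar{\nu}_B,\bar{n}_B)]$ with $V(\bar{\alpha}-\alpha)+O(V\cosh^{-2\tau}r)$ to \cite[(3.9)--(3.30)]{miao-mass-2021}, which is precisely the ``Miao--Piubello boundary calculation'' you flag as the main obstacle and where $\sin\bar{\alpha}\ge c$ enters. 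The only refinement relative to your sketch is that the tangential field is concretely $W=VX$ with $X$ the $b|_F$-dual of $e(\bar{\nu},\cdot)$, rather than a more general combination involving $\bar{\nabla}V$.
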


As suggested by Miao {\cite{miao-measuring-2020}}, this type of formula may be
used to promote the Gromov dihedral rigidity {\cite{gromov-dirac-2018}} to an
integrated form. However, in the asymptotically flat case, one can perturb
graphically of a face and use Taylor expansion to the third order to see a
counterexample.

Also, the Miao-Piubello type formula {\eqref{miao-piubello}} leads us to do
comparison between a Riemannian polyhedron with the polyhedron whose faces are
realized as Euclidean planes in the upper half space model. In particular, one
could consider the Gromov type dihedral rigidity. The case when the model is a
cone type polyhedron with base faces lying on a horosphere is possible using
the method developed by {\cite{li-polyhedron-2020}}. Instead of minimal
surface with capillary angle condition, one uses constant mean curvature two
surfaces with capillary angle condition.

\section{Example and proof}

Before we show the proof of Theorem \ref{miao}, we calculate an easy example
other than the parabolic cylinder {\cite{jang-hyperbolic-2021}} to illustrate
that the terms $\int_{\partial \mathbf{{\Delta}}_q} \cosh^{- 2 \tau + 1} r
\mathrm{d} \bar{\sigma}$ and $\int_{E_q} \cosh^{- 2 \tau + 1} r \mathrm{d}
\bar{\lambda}$ can be $o (1)$. We assume the base face $B$ is a regular
$n$-side polygon lies at the horosphere $\{x^3 = \varepsilon\}$ and centered
at $(0, 0, \varepsilon)$ for $\varepsilon$ small, the apex is $(0, 0,
\varepsilon^{- 1})$. The distance between the vertex of the polygon to the
$x^3$-axis is $\rho (\varepsilon)$. We assume that
\begin{equation}
  \rho (\varepsilon) = o (\varepsilon^{- 2 \tau})
\end{equation}
and $\rho (\varepsilon) \to \infty$ as $\varepsilon \to 0$. We use aliases $x
= x^1$, $y = x^2$, $z = x^3$ for the coordinates.

First, we deal with integrals on the edges. There are two type edges. By
rotational symmetry, we can assume one edge is
\[ E_1 = \{(0, \tfrac{1 - \varepsilon z}{1 - \varepsilon^2} \rho, z) : z \in
   [\varepsilon, \varepsilon^{- 1}]\} . \]
The length element $\mathrm{d} \bar{\lambda}$ on $E_1$ is
\[ z^{- 1} \sqrt{1 + \frac{\varepsilon^2 \rho^2}{(1 - \varepsilon^2)^2}}
   \mathrm{d} z \leqslant \sqrt{2} z^{- 1} \max \{C \varepsilon \rho, 1\} \]
since we are take $\varepsilon \to 0$. So
\begin{align}
  & \int_{E_1}^{} \cosh^{- 2 \tau + 1} r \mathrm{d} \bar{\lambda} \\
  \leqslant & C \max \{\varepsilon \rho, 1\}
  \int_{\varepsilon}^{\varepsilon^{- 1}} z^{- 2 + 2 \tau} (z^2 + 1 + (\tfrac{1
  - \varepsilon z}{1 - \varepsilon^2} \rho)^2)^{- 2 \tau + 1} \mathrm{d} z.
\end{align}
We divide the integral into three parts. On $[\varepsilon, 1]$, we have
\begin{align}
  & \max \{C \varepsilon \rho, 1\} \int_{\varepsilon}^1 z^{- 2 + 2 \tau} (z^2
  + 1 + (\tfrac{1 - \varepsilon z}{1 - \varepsilon^2} \rho)^2)^{- 2 \tau + 1}
  \mathrm{d} z \\
  \leqslant & \max \{C \varepsilon \rho, 1\} (\tfrac{1}{1 + \varepsilon}
  \rho)^{- 2 \tau + 2} \int_{\varepsilon}^1 z^{- 2 + 2 \tau} \mathrm{d} z
  \\
  = & o (1) \label{below horosphere} 
\end{align}
since $\tau > \tfrac{3}{2}$. On $[1, \tfrac{\varepsilon^{- 1}}{2}]$, for $1 <
a < 2 \tau - 1$,
\begin{align}
  & \max \{C \varepsilon \rho, 1\} \int_1^{\varepsilon^{- 1} / 2} z^{- 2 + 2
  \tau} (z^2 + 1 + (\tfrac{1 - \varepsilon z}{1 - \varepsilon^2} \rho)^2)^{- 2
  \tau + 1} \mathrm{d} z \\
  = & \max \{C \varepsilon \rho, 1\} \int_1^{\varepsilon^{- 1} / 2} z^{- 2 + 2
  \tau - (2 \tau - 2 + a)} (z^2 + 1 + (\tfrac{1 - \varepsilon z}{1 -
  \varepsilon^2} \rho)^2)^{- 2 \tau + 1 + \tfrac{1}{2} (2 \tau - 2 + a)}
  \mathrm{d} z \\
  \leqslant & C \max \{C \varepsilon \rho, 1\} \rho^{- 4 \tau + 2 + 2 \tau - 2
  + a} \int_1^{\varepsilon^{- 1} / 2} z^{- a} \mathrm{d} z \\
  \leqslant & C \max \{C \varepsilon \rho, 1\} \rho^{- 2 \tau + a}
  \label{lower half} 
\end{align}
which is $o (1)$ as $\rho (\varepsilon) \to \infty$. On
$[\tfrac{\varepsilon^{- 1}}{2}, \varepsilon^{- 1}]$, we have that
\begin{align}
  & \max \{C \varepsilon \rho, 1\} \int_{\varepsilon^{- 1} /
  2}^{\varepsilon^{- 1}} z^{- 2 + 2 \tau} (z^2 + 1 + (\tfrac{1 - \varepsilon
  z}{1 - \varepsilon^2} \rho)^2)^{- 2 \tau + 1} \mathrm{d} z \\
  \leqslant & \max \{C \varepsilon \rho, 1\} \int_{\varepsilon^{- 1} /
  2}^{\varepsilon^{- 1}} z^{- 2 + 2 \tau - 4 \tau + 2} \mathrm{d} z
  \\
  \leqslant & \max \{C \varepsilon \rho, 1\} \varepsilon^{2 \tau - 1} .
  \label{top half} 
\end{align}
Requiring that $\rho (\varepsilon) = o (\varepsilon^{- 2 \tau})$ this term is
$o (1)$. By rotational symmetry again, the other type edges we can just
consider
\begin{equation}
  E_2 = \{(\rho \cos \tfrac{\pi}{n}, y, \varepsilon) : y \in [- \rho \sin
  \tfrac{\pi}{n}, \rho \sin \tfrac{\pi}{n}]\} .
\end{equation}
The integral of $\cosh^{- 2 \tau + 1} r$ on $E_2$ is
\begin{align}
  & \int_{E_2} \cosh^{- 2 \tau + 1} r \mathrm{d} \bar{\lambda} \\
  \leqslant & C \int_{- \rho \sin \tfrac{\pi}{n}}^{\rho \sin \tfrac{\pi}{n}}
  \varepsilon^{- 1} \cdot \varepsilon^{- 1 + 2 \tau} (y^2 + \rho^2 \cos^2
  \tfrac{\pi}{n} + \varepsilon^2 + 1)^{- 2 \tau + 1} \mathrm{d} y \\
  \leqslant & C \varepsilon^{- 2 + 2 \tau} \int_{\mathbb{R}} (x^2 + 1)^{- 2
  \tau + 1} \mathrm{d} y = o (1) 
\end{align}
as $\varepsilon \to 0$.

For the face $F$ of $\mathbf{{\Delta}}$ lying on $\{z = \varepsilon\}$, we
have
\begin{align}
  & \int_F \cosh^{- 2 \tau + 1} r \mathrm{d} \bar{v} \\
  \leqslant & C \int_F \varepsilon^{- 2 - 1 + 2 \tau} (x^2 + y^2 +
  \varepsilon^2 + 1)^{- 2 \tau + 1} \mathrm{d} x \mathrm{d} y \\
  \leqslant & C \varepsilon^{2 \tau - 3} \int_0^{\rho (\varepsilon)} (s^2 +
  1)^{- 2 \tau + 1} s \mathrm{d} s = o (1) 
\end{align}
as $\varepsilon \to 0$. Now we consider a side face, we use the symbol $S$. We
use the Euclidean distance $\xi$ of a point in side edges to the $z$-axis and
$z$ to parametrized $S$. First, $\xi = \tfrac{1 - \varepsilon z}{1 -
\varepsilon^2} \rho$, then
\begin{equation}
  S = \{(\xi \cos \tfrac{\pi}{n}, y, z) : y \in [- \xi \sin \tfrac{\pi}{n},
  \xi \sin \tfrac{\pi}{n}], z \in [\varepsilon, \varepsilon^{- 1}]\} .
\end{equation}
It is easy to show that
\begin{align}
  & \int_S \cosh^{- 2 \tau + 1} r \mathrm{d} \bar{\sigma} \\
  = & \sqrt{1 + \tfrac{\varepsilon^2 \rho^2}{(1 - \varepsilon^2)^2}} 2^{2 \tau
  - 1} \int_{\varepsilon}^{\varepsilon^{- 1}} \int_{- \xi \sin
  \tfrac{\pi}{n}}^{\xi \sin \tfrac{\pi}{n}} z^{2 \tau - 3} (\xi^2 \cos^2
  \tfrac{\pi}{n} + y^2 + 1 + z^2)^{- 2 \tau + 1} \mathrm{d} y \mathrm{d} z
  \\
  \leqslant & \max \{C \varepsilon \rho, C_1 \}
  \int_{\varepsilon}^{\varepsilon^{- 1}} \xi \sin \tfrac{\pi}{n} z^{2 \tau -
  3} (\xi^2 \cos^2 \tfrac{\pi}{n} + 1 + z^2)^{- 2 \tau + 1} \mathrm{d} z
\end{align}
Similar to {\eqref{below horosphere}} and {\eqref{lower half}}, we have that
\begin{equation}
  \max \{C \varepsilon \rho, C_1 \} (\int_{\varepsilon}^1 +
  \int_1^{\varepsilon^{- 1} / 2}) = o (1) .
\end{equation}
For the integral over $[\varepsilon^{- 1} / 2, \varepsilon^{- 1}]$, we absorb
$\xi \sin \tfrac{\pi}{n}$ by $\xi^2 \cos^2 \tfrac{\pi}{n} + z^2 + 1$ and
\begin{align}
  & \max \{C \varepsilon \rho, C_1 \} \int_{\varepsilon^{- 1} /
  2}^{\varepsilon^{- 1}} \xi \sin \tfrac{\pi}{n} z^{2 \tau - 3} (\xi^2 \cos^2
  \tfrac{\pi}{n} + 1 + z^2)^{- 2 \tau + 1} \mathrm{d} z \\
  \leqslant & \max \{C \varepsilon \rho, C_1 \} \int_{\varepsilon^{- 1} /
  2}^{\varepsilon^{- 1}} z^{2 \tau - 3} (\xi^2 \cos^2 \tfrac{\pi}{n} + 1 +
  z^2)^{- 2 \tau + 1 + \tfrac{1}{2}} \mathrm{d} z \\
  \leqslant & \max \{C \varepsilon \rho, C_1 \} \int_{\varepsilon^{- 1} /
  2}^{\varepsilon^{- 1}} z^{2 \tau - 3 - 4 \tau + 2 + 1} \mathrm{d} z
  \\
  \leqslant & \max \{C \varepsilon \rho, C_1 \} \varepsilon^{2 \tau - 1} = o
  (1) . 
\end{align}
where we have required that $\rho (\varepsilon) = o (\varepsilon^{- 2 \tau})$.

\begin{proof}[Proof of Theorem \ref{miao}]
  We have on each face $F$ of $\mathbf{{\Delta}}_q$,
  \begin{equation}
    2 V (H - \bar{H}) = -\mathbb{U}^i \bar{\nu}_i
    -\ensuremath{\operatorname{div}}_F (V X) + O (\mathrm{e}^{- 2 \tau r + r})
    \label{mean curvature expansion}
  \end{equation}
  where $X$ is the vector field dual to the 1-form $e (\bar{\nu}, \cdot)$ with
  respect to the metric $b|_F$. This is an easy consequence of
  {\cite[(2.5)]{jang-hyperbolic-2021}} that
  \begin{align}
    & \mathbb{U} (\bar{\nu}) \\
    = & 2 V (\bar{H} - H) -\ensuremath{\operatorname{div}}_F (V X) +
    [(\ensuremath{\operatorname{tr}}_b e - e (\bar{\nu}, \bar{\nu})) \langle
    \mathrm{d} V, \bar{\nu} \rangle - V \langle \bar{A}, e \rangle_b] + O
    (\mathrm{e}^{- 2 \tau r + r}) . 
\end{align}
  From {\eqref{normal derivative of static potential}} and {\eqref{second
  fundamental form background}}, we obtain the desired formula {\eqref{mean
  curvature expansion}}. From {\eqref{mean curvature expansion}}, we have that
  \begin{equation}
    \int_{\partial \mathbf{{\Delta}}_q} \mathbb{U}^i \bar{\nu}_i \mathrm{d}
    \bar{\sigma} = \int_{\partial \mathbf{{\Delta}}_q} [- 2 V (H - \bar{H})
    -\ensuremath{\operatorname{div}}_{\partial \mathbf{{\Delta}}_q} (V X)]
    \mathrm{d} \bar{\sigma} + o (1)
  \end{equation}
  On each face $F$, using divergence theorem
  \begin{equation}
    \int_F \ensuremath{\operatorname{div}}_F (V X) \mathrm{d} \bar{\sigma} =
    \int_{\partial F} V e (\bar{\nu}, \bar{n}) \mathrm{d} \bar{\lambda},
  \end{equation}
  where $\bar{n}$ is the $b$-normal to $\partial F$ in $F$. On the edge $F_A
  \cap F_B$, the contribution is
  \begin{equation}
    \int_{F_A \cap F_B} V [e (\bar{\nu}_A, \bar{n}_A) + e (\bar{\nu}_B,
    \bar{n}_B)] \mathrm{d} \bar{\lambda} .
  \end{equation}
  Let $g_{i j} = g (\partial_i, \partial_j)$, we have that
  \begin{equation}
    \varepsilon_{i j} := (x^n)^2 e_{i j} = (x^n)^2 g_{i j} - (x^n)^2
    \bar{g}_{i j} = (x^n)^2 \bar{g}_{i j} - \delta_{i j} = O (\mathrm{e}^{-
    \tau r})
  \end{equation}
  The $g$-normal to the face $F_A$ is then expressed as
  \begin{equation}
    \nu_A = \frac{g^{i j} a_j \partial_j}{\sqrt{g^{k l} a_k a_l}} .
  \end{equation}
  We write
  \begin{align}
    \cos \theta = & g (\nu_A, \nu_B) \\
    = & (a_i b_j g^{i j}) (g^{k l} a_k a_l)^{- \tfrac{1}{2}} (g^{p q} a_p
    a_q)^{- \tfrac{1}{2}} \\
    = & a_i b_j \tfrac{g^{i j}}{(x^n)^2} (\tfrac{g^{k l}}{(x^n)^2} a_k a_l)^{-
    \tfrac{1}{2}} (\tfrac{g^{p q}}{(x^n)^2} a_p a_q)^{- \tfrac{1}{2}} .
\end{align}
  Up to here, it follows from the same lines as in
  {\cite[(3.9)-(3.30)]{miao-mass-2021}} to show that
  \begin{align}
    & \int_{F_A \cap F_B} V [e (\bar{\nu}_A, \bar{n}_A) + e (\bar{\nu}_B,
    \bar{n}_B)] \mathrm{d} \bar{\lambda} \\
    = & \int_{F_A \cap F_B} V (\bar{\alpha} - \alpha) + \int_{F_A \cap F_B} V
    \cosh^{- 2 \tau} r \mathrm{d} \bar{\lambda} . 
\end{align}
  Therefore,
  \begin{align}
    & \mathbf{M} (V) \\
    = & - \int_{\partial \mathbf{{\Delta}}_q} 2 V (H - \bar{H}) \mathrm{d}
    \bar{\sigma} + 2 \int_{E_q} V (\alpha - \bar{\alpha}) \mathrm{d}
    \bar{\lambda} \\
    & + \int_{\partial \mathbf{{\Delta}}_q} \cosh^{- 2 \tau + 1} r \mathrm{d}
    \bar{\sigma} + \int_{E_q} \cosh^{- 2 \tau + 1} r \mathrm{d} \bar{\lambda}
    + o (1), 
\end{align}
  obtaining the theorem.
\end{proof}

\end{document}